\documentclass{amsart}
\usepackage[utf8]{inputenc}
\usepackage{amsfonts}
\usepackage{amsmath}
\usepackage{hyperref}
\usepackage{amsthm}
\usepackage{float}
\usepackage{graphicx}
\usepackage{subfigure}
\usepackage{witharrows}
\usepackage{mathtools}
\usepackage{algorithm}
\usepackage{algpseudocode}

\usepackage{comment}
\usepackage{geometry}
\newtheorem{thm}{Theorem}
 
\newtheorem{ex}[thm]{Example}

\newtheorem{lem}[thm]{Lemma}

\newtheorem*{thmLL}{Theorem (Li--Lu)}

\algrenewcommand\algorithmicrequire{\textbf{Input:}}
\algrenewcommand\algorithmicensure{\textbf{Output:}}

\title[Lift-free approaches to random rotation number]{Lift-free approaches to random rotation number \\ and numerical approximation}
\author{Zixu Li}
\address{School of Mathematics, Sun Yat-sen University, Zhuhai, P.R.China}
\email{zixu.li915@gmail.com}
\thanks{This work was partially supported by fund PGRS2006028.}
\author{Simon Lloyd}
\address{School of Mathematics and Physics, Xi'an Jiaotong-Liverpool University, Suzhou, P.R.China}
\email{Simon.Lloyd@xjtlu.edu.cn}

\begin{document}

\begin{abstract}
We study the random rotation number for random circle homeomorphisms. We introduce two new definitions of the random rotation number that can be stated without reference to any choice of lift of the dynamics to the real line, and prove that they are equivalent to the standard random rotation number. We then prove that the mean random rotation number may be approximated within an error of $1/n$ when using $n$ iterations of the dynamics. Finally, we develop numerical algorithms for approximation of the random rotation number which we test with several examples.
\end{abstract}

\maketitle



\section{Introduction}
The rotation number is a fundamental concept in the study of the dynamics of homeomorphisms of the circle. Introduced by Poincar\'e \cite{Poincare1885}
in the context of celestial mechanics, it measures the average rotation angle along orbits, and is used in the characterisation of phase-locking phenomena and the topological classification of circle homeomorphisms.

In this article we consider the generalisation of the rotation theory to random circle homeomorphisms, which consist of two parts:
\begin{enumerate}
\item ({\it Base dynamics}) a discrete-time measure-preserving dynamical system $\sigma:\Omega\to\Omega$ on a probability space, which serves as a model for a stationary noise process;
\item ({\it Fibre dynamics}) orientation-preserving circle homeomorphisms $f_\omega:S^1\to S^1$ that depend measurably on the noise.
\end{enumerate}
We aim to investigate the asymptotic properties of the dynamics on the circle of the random compositions of the form
\begin{align}
f^{(n)}_\omega:= f_{\sigma^{n-1}\omega}\circ \cdots \circ f_{\sigma\omega}\circ f_\omega 
\end{align}
for $n\in\mathbb{N}$ and typical $\omega\in\Omega$. The concept of rotation number was generalised to random circle homeomorphisms by Ruffino \cite{Ruffino2000}, and developed by Li and Lu \cite{LiLu2008}, who proved that for any integrable lift of a random circle homeomorphism to the real line, the \emph{random rotation number} exists as an integrable function. This generalises prior results about the existence of the rotation number for quasiperiodically-forced circle homeomorphisms, such as the theorem of Herman \cite{Herman1983} in the case where the base dynamics is an irrational rotation of the circle and the dependence of the fibre maps is continuous.

The definition of the random rotation number depends on making a choice of lift of the dynamics to the real line. Unlike in the deterministic case, where the lift is unique up to an integer, for random circle homeomorphisms there is much more freedom in the choice of lift, as investigated by Rodrigues and Ruffino \cite{RodriguesRuffino2013}. This motivates the search for alternative formulations of the random rotation number that do not depend on lifting the dynamics to the real line.

As the rotation number is an important topological invariant, significant efforts have been made to find effective numerical algorithms for its approximation in the case of unforced homeomorphisms (see, for example, \cite{vanVeldhuizen1988,Bruin1992,Pavani1995,SearaVillanueva2006,AlsedaBorroscullell2021}) and for quasiperiodically-forced homeomorphisms (see, for example, \cite{Starketal2002, Polotzeketal2017, DasYorke2018}). However, to the authors' knowledge, there is no prior work on methods for numerical approximation of the random rotation number.

\medskip

In this article, we first review the key properties of the deterministic rotation number and the random rotation number. The main theoretic results are two lift-free approaches to defining the random rotation number, using a binary coding method (Theorem \ref{thm:binarycoding}) and a visit-counting method (Theorem \ref{thm:visitcounting}) and a rigorous bound for approximation of the mean random rotation number (Theorem \ref{thm:1N}). In Section \ref{sec:NA}, we present numerical algorithms based on these methods for approximating the random rotation number, and in Section \ref{Sec:numerics}, we test the algorithms on some example systems.

\section{Background and notation}

In this section, we first review the deterministic rotation number and its basic properties, and then describe how the concept is extended to random compositions of circle homeomorphisms.

\subsection{Orientation-preserving circle homeomorphisms}
Let $S^1$ denote the circle, parametrized by $x\in [0,1)$. Let $\pi:\mathbb{R}\to S^1$ denote the projection map $\pi(x)=x\:\mathrm{mod}\:1 =\{x\}=x-\lfloor x\rfloor$, where $\lfloor\cdot\rfloor$ denotes the floor function and $\{x\}$ denotes the fractional part of $x$. A homeomorphism $f:S^1\to S^1$ is \emph{orientation-preserving} if there exists an increasing homeomorphism $F:\mathbb{R}\to\mathbb{R}$ such that $\pi\circ F=f\circ \pi$. Such a map $F$ is a \emph{lift} of $f$ and satisfies the \emph{degree one property}
\begin{align}\label{eq:degreeone}
F(x+1)=F(x)+1\quad \textrm{for all}\ x\in \mathbb{R}.
\end{align}
The lift is unique up to an integer: if $F_1$ and $F_2$ are both lifts of an orientation-preserving circle homeomorphism $f$, then there exists an integer $k\in\mathbb{Z}$ such that $F_2(x)=F_1(x)+k$ for all $x\in\mathbb{R}$. We denote by $\mathcal{H}$ the collection of orientation-preserving circle homeomorphisms, and by $\tilde{\mathcal{H}}$ the collection of lifts, the increasing homeomorphisms of the real line that satisfy the degree one property. 

We now recall the definition and basic properties of the rotation number - see, for example, Katok--Hasselblatt \cite{KatokHasselblatt1995} for an overview of the classical theory. The rotation number of an orientation-preserving circle homeomorphism $f\in\mathcal{H}$ can be defined as follows. For a lift $F$ of $f$ and $x\in\mathbb{R}$, the rotation number of $F$ is defined to be the limit
\begin{align}
\rho(F)=\lim_{n\to\infty} \frac{F^n(x)-x}{n}.
\end{align}
Poincar\'e \cite{Poincare1885} showed that the limit exists and is independent of $x\in\mathbb{R}$. Moreover, the bound
\begin{align}\label{eq:rhodesthompson}
\left| \frac{F^n(0)}{n} -\rho(F) \right|\leq \frac{1}{n}
\end{align}
(see Rhodes and Thompson \cite{RhodesThompson1991}) ensures control of the error when approximating $\rho(F)$. The fractional part of the rotation number $\rho(F)$ is independent of the choice of lift, and so the rotation number of $f$ can be defined as $\rho(f)=\pi(\rho(F))$ for any lift $F$ of $f$. 

The rotation number has several important properties. It is invariant under topological conjugacy, or even the weaker notion of topological semiconjugacy: that is, given $f,g\in\mathcal{H}$ and continuous monotone map $h:S^1\to S^1$ (not necessarily a homeomorphism) such that $h\circ f=g\circ h$, then $\rho(f)=\rho(g)$. The rotation number varies monotonically: if $F,G\in\tilde{\mathcal{H}}$ with $F(x)<G(x)$ for all $x\in\mathbb{R}$, then $\rho(F)\leq \rho(G)$. Moreover, the inequality is strict if either $\rho(F)$ or $\rho(G)$ is irrational.
Rationality of the rotation number characterises the existence of periodic orbits: the rotation number is rational if and only if there is a periodic point. Moreover, if the rotation number is a rational number $p/q$ (in lowest terms), then there exists a periodic orbit of period $q$ and the points of the orbit are permuted by $f$ in the same way as the points of the orbit of $0$ are permuted by the rotation by $p/q$.
Finally, for $k\in\mathbb{N}$, there is a formula for the rotation number of the $k$th iterate of $F$ in terms of $\rho(F)$:
\begin{align}\label{eq:rhoiteration}
\rho(F^k)=k\rho(F).
\end{align}

A lift $F\in\tilde{\mathcal{H}}$ can be studied in terms of the \emph{displacement function} $\Delta_F:\mathbb{R}\to\mathbb{R}$ given by $\Delta_F(x)=F(x)-x$, which quantifies the deviation of $F$ from the identity map. Since $\Delta_F$ is continuous and $1$-periodic by (\ref{eq:degreeone}), we can equip $\tilde{\mathcal{H}}$ with a norm based on the supremum norm of the displacement functions:
\begin{align}
\|F\|_{\tilde{\mathcal{H}}}=\|\Delta_F\|=\sup_{x\in\mathbb{R}}|\Delta_F(x)|.
\end{align}
A metric on $\mathcal{H}$ is then given by
\begin{align}
d_{\mathcal{H}}(f,g)=\inf\left\{\|F-G\|_{\tilde{\mathcal{H}}}: F\textrm{ is a lift of }f, G\textrm{ is a lift of }g \right\}.
\end{align}
The rotation number $\rho$ is continuous with respect to the resulting topologies on $\tilde{\mathcal{H}}$ and on $\mathcal{H}$.

\subsection{Random circle homeomorphisms}

We now formalise the concept of random circle homeomorphism to fix notation. 
Recall that a \emph{measure-preserving dynamical system} is a quadruple ($\Omega,\mathcal{F},\mathbb{P},\sigma)$, where $\Omega$ is a set, $\mathcal{F}$ is a $\sigma$-algebra of subsets, $\mathbb{P}$ is a probability measure and $\sigma:\Omega\to\Omega$ is a measurable transformation that preserves the probability measure $\mathbb{P}$: that is $\mathbb{P}(\sigma^{-1}(A))=\mathbb{P}(A)$ for each set $A\in\mathcal{F}$. We write $\sigma\omega$ for $\sigma(\omega)$.

A \emph{random circle homeomorphism (RCH)} over $(\Omega,\mathcal{F},\mathbb{P},\sigma)$ is a measurable map $\phi:\mathbb{N}_0\times\Omega\times S^1\to S^1$ with the following properties for each $n,m\in \mathbb{N}_0=\{0,1,2,\ldots\}$, $\omega\in\Omega$ and $x\in S^1$:
\begin{enumerate}
\item $\phi(0,\omega,\cdot)=\mathrm{Id}_{S^1}$;
\item $\phi(n,\omega,\cdot)\in \mathcal{H}$;
\item $\phi(m+n,\omega,x)=\phi(n,\sigma^m\omega,\phi(m,\omega,x))$.
\end{enumerate}
The measure-preserving dynamical system $(\Omega,\mathcal{F},\mathbb{P},\sigma)$ is called the \emph{base dynamics} of $\phi$. If we denote $\phi(n,\omega,\cdot)$ by $f^{(n)}_\omega$ and $\phi(1,\omega,\cdot)$ by $f_\omega$, then
\begin{align}\label{eq:fnomega}
\phi(n,\omega,\cdot)=f^{(n)}_\omega=f_{\sigma^{n-1}\omega}\circ \cdots \circ f_{\sigma\omega}\circ f_{\omega}.
\end{align}

Let $\mathcal{H}(\Omega)$ denote the collection of measurable maps $f:\Omega\to \mathcal{H}$, where $\omega\mapsto f_\omega$. We say $f\in \mathcal{H}(\Omega)$ is the \emph{generator} of the RCH $\phi:\mathbb{N}_0\times\Omega\times S^1\to S^1$ over $(\Omega,\mathcal{F},\mathbb{P},\sigma)$ defined by (\ref{eq:fnomega}). For brevity, we denote this RCH simply by $(\sigma,f)$. 

We extend the concept of lift to random circle homeomorphisms: a measurable map $F:\Omega\to\tilde{\mathcal{H}}$ is said to be a \emph{lift} of $f\in \mathcal{H}(\Omega)$ if $\pi\circ F_\omega=f_\omega\circ \pi$ for each $\omega\in\Omega$. We let $\tilde{H}(\Omega)$ denote the collection of measurable maps $F:\Omega\to\tilde{\mathcal{H}}$ that satisfy the integrability condition
\begin{align}
\int_{\omega\in\Omega} \|F_\omega\|_{\tilde{\mathcal{H}}}\:\mathrm{d}\mathbb{P}(\omega)
=\int_{\omega\in\Omega} \|\Delta_{F_\omega}\|\:\mathrm{d}\mathbb{P}(\omega)<\infty. 
\end{align}

Random circle homeomorphism are a generalisation of other classes of dynamical system. If $\Omega=S^1$ is the circle, the base transformation $\sigma:\Omega\to\Omega$ is an irrational rotation and the map $f:\Omega\to\mathcal{H}$ is continuous, then $(\sigma,f)$ is a \emph{quasiperiodically-forced circle homeomorphism}, a class of systems of much interest due to the existence of strange non-chaotic attractors (\cite{Herman1983}, \cite{Grebogietal1984}).
We can recover the deterministic dynamics of a single circle homeomorphism by taking $\Omega$ to be a singleton set $\{p\}$ and $\sigma$ to be the identity transformation.

The concept of rotation number was generalised to random circle homeomorphisms by Ruffino \cite{Ruffino2000} for lifts satisfying $F_\omega\in (-1/2,1/2]$ for all $\omega\in\Omega$. Later  Li and Lu \cite{LiLu2008} proved that for any integrable lift $F\in\tilde{\mathcal{H}}(\Omega)$  of a random circle homeomorphism, the \emph{random rotation number} exists as an integrable function $\rho_F:\Omega\to\mathbb{R}$.

\begin{thmLL}
If $F\in \tilde{\mathcal{H}}(\Omega)$, then there exists $\rho_F\in L^1(\Omega)$ such that for each $x\in\mathbb{R}$
\begin{align}\label{eqn:randomrotationnumber}
\lim_{n\to\infty} \frac{F^{(n)}_\omega(x)-x}{n} = \rho_F(\omega)
\end{align}
for $\mathbb{P}$-almost every $\omega\in\Omega$. The function $\rho_F$ satisfies $\rho_F\circ \sigma=\rho_F$, and thus is essentially constant if $\sigma$ is ergodic.
Moreover, $\rho: \tilde{\mathcal{H}}(\Omega)\to L^1(\Omega)$ is continuous.

If, in addition, the base dynamics $\sigma:\Omega\to\Omega$ is a continuous and uniquely ergodic transformation of a compact metric space, then the function $\rho_F$ is (everywhere) constant. 
\end{thmLL}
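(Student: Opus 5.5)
The plan is to reduce everything to Kingman's subadditive ergodic theorem, applied to the two extremal displacements of the random compositions. For $n\in\mathbb{N}$ set
\begin{align*}
a_n(\omega)=\sup_{x\in\mathbb{R}}\bigl(F^{(n)}_\omega(x)-x\bigr),\qquad b_n(\omega)=\inf_{x\in\mathbb{R}}\bigl(F^{(n)}_\omega(x)-x\bigr).
\end{align*}
Both are measurable: the displacement is $1$-periodic and continuous, so the sup and inf may be taken over rational $x\in[0,1]$. Both are integrable, since $|a_1|,|b_1|\le \|\Delta_{F_\omega}\|$ and $|a_n|,|b_n|\le \sum_{k<n}\|\Delta_{F_{\sigma^k\omega}}\|$. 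Writing $F^{(m+n)}_\omega(x)-x$ as the displacement of $F^{(n)}_{\sigma^m\omega}$ at the point $F^{(m)}_\omega(x)$, plus the displacement of $F^{(m)}_\omega$ at $x$, gives
\begin{align*}
a_{m+n}\le a_m+a_n\circ\sigma^m,\qquad b_{m+n}\ge b_m+b_n\circ\sigma^m.
\end{align*}
Kingman's theorem then gives a.e.\ and $L^1$ limits $\overline\rho=\lim a_n/n$ and $\underline\rho=\lim b_n/n$, both $\sigma$-invariant.

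The key step is to show that $a_n-b_n\le 1$. Fix $x,y$. By periodicity we may assume $y\in[x,x+1)$. Monotonicity and the degree one property (\ref{eq:degreeone}) then give
\begin{align*}
F^{(n)}_\omega(x)\le F^{(n)}_\omega(y)<F^{(n)}_\omega(x)+1,
\end{align*}
so the two displacements differ by less than $1$. Hence $\overline\rho=\underline\rho=:\rho_F$ a.e., and $\rho_F\in L^1$ because $|b_n|/n$ is dominated by Birkhoff averages of $\|\Delta_{F}\|$. For any fixed $x$, the quantity $(F^{(n)}_\omega(x)-x)/n$ is squeezed between $b_n/n$ and $a_n/n$. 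This yields (\ref{eqn:randomrotationnumber}) simultaneously for all $x$ on a single full-measure set. Invariance $\rho_F\circ\sigma=\rho_F$ follows from $F^{(n+1)}_\omega=F^{(n)}_{\sigma\omega}\circ F_\omega$ together with the bound $|F_\omega(x)-x|\le\|\Delta_{F_\omega}\|<\infty$, and ergodicity then makes $\rho_F$ essentially constant.

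For continuity of $\rho:\tilde{\mathcal{H}}(\Omega)\to L^1$, I would estimate $|a_n^F-a_n^G|$ by telescoping the compositions. The obstacle is that the earlier maps amplify differences, so a direct per-step bound like $\sum_k\|F_{\sigma^k\omega}-G_{\sigma^k\omega}\|$ is not immediate. Instead I would use $L^1$ convergence in Kingman: $\int\rho_F=\inf_n \frac1n\int a_n^F$ and $\int\rho_F=\sup_n\frac1n\int b_n^F$, with $a_n-b_n\le1$. Fix $n$ large so that $1/n<\varepsilon$. For that fixed $n$, the map $F\mapsto\frac1n\int a_n^F$ is continuous in the $L^1$ lift norm; this should follow by approximating by lifts that are uniformly bounded, using dominated convergence and equicontinuity on a finite number of compositions. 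Applying the same reasoning on invariant sets $A$, by integrating over $A$, handles the non-ergodic case and gives $\|\rho_F-\rho_G\|_{L^1}$ small. This continuity step is the one I expect to be most delicate.

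Finally, for continuous uniquely ergodic $\sigma$ on a compact space, the functions $a_n,b_n$ are continuous if $\omega\mapsto F_\omega$ is continuous, and that continuity is needed here. For continuous subadditive sequences over a uniquely ergodic system, the uniform version of the subadditive theorem (Furman/Sturman--Stark) gives $\limsup a_n/n\le\inf_n\frac1n\int a_n$ everywhere, and dually for $b_n$. Combined with $b_n\le a_n\le b_n+1$, the limit exists everywhere and equals the constant $\int\rho_F$.
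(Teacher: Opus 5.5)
The paper gives no proof of this statement. It is quoted from Li and Lu, so there is no in-paper argument to compare with, and your proposal has to stand on its own. It does stand, once the continuity step below is tightened.

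Kingman applied to the extremal displacements $a_n,b_n$, combined with $a_n-b_n\le 1$ (exactly Lemma~\ref{lem:hermanbound}), gives existence, integrability and invariance of $\rho_F$. It even gives the stronger fact that one full-measure set serves every $x$ at once. Your route also yields Theorem~\ref{thm:1N} for free. Since $b_n\le F^{(n)}_\omega(0)\le a_n$, and Fekete applied to the integrated sequences gives $\frac1n\int b_n\le\hat\rho(F)\le\frac1n\int a_n$, both $\frac1n R_n$ and $\hat\rho(F)$ lie in an interval of length at most $\frac1n$. So the separate identification $R=\hat\rho(F)$ via dominated convergence in the paper is unnecessary.

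\textbf{Continuity of $\rho$.} As written, this step is not yet a proof. Continuity of $F\mapsto\frac1n\int a_n^F$ only gives continuity of the mean $\hat\rho$. Integrating over invariant sets needs uniformity in the set, because the set you actually need, $A=\{\rho_F>\rho_G\}$, depends on $G$.

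Make it quantitative. For any invariant $A$, Fekete gives $\int_A\rho_F\le\frac1n\int_A a_n^F$, and also $\int_A\rho_G\ge\frac1n\int_A b_n^G\ge\frac1n\int_A a_n^G-\frac1n\mathbb{P}(A)$. Apply this to $A$ and to $\Omega\setminus A$ and add:
\begin{align*}
\|\rho_F-\rho_G\|_{L^1}\le\frac1n\|a_n^F-a_n^G\|_{L^1}+\frac1n .
\end{align*}
So what you need is $L^1$-continuity of $F\mapsto a_n^F$ for fixed $n$. There is no equicontinuity in $\omega$ to use, but none is required:
\begin{itemize}
\item If $\int\|G^j_\omega-F_\omega\|\,\mathrm{d}\mathbb{P}\to 0$, pass to a subsequence with summable norms.
\item Then for a.e.\ $\omega$, $G^j_{\sigma^k\omega}\to F_{\sigma^k\omega}$ uniformly for every $k<n$.
\item Uniform continuity of each fixed $F_{\sigma^k\omega}$ makes the $n$-fold compositions converge uniformly, so $a_n^{G^j}\to a_n^F$ a.e.
\item The function $\sum_{k<n}\bigl(\|\Delta_{F_{\sigma^k\omega}}\|+\sum_j\|G^j_{\sigma^k\omega}-F_{\sigma^k\omega}\|\bigr)$ is an integrable dominating function, so the convergence holds in $L^1$.
\item The subsequence principle finishes the argument.
\end{itemize}

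\textbf{Final clause.} You are right that it needs $\omega\mapsto F_\omega$ to be continuous, and the statement as reproduced in the paper omits this hypothesis. Without it the clause is false. Over an irrational rotation, set $F_\omega(x)=x+1$ when $\omega$ lies on the forward orbit of $0$, and $F_\omega(x)=x$ otherwise. This is a measurable, integrable lift with $\rho_F=0$ almost everywhere, but the limit at $\omega=0$ equals $1$. With continuity added, $a_n$ and $b_n$ are continuous, and your appeal to the uniform subadditive theorem (Furman, Sturman--Stark), squeezed by $b_n\le a_n\le b_n+1$, correctly gives an everywhere-constant limit.
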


A well-studied family of circle homeomorphisms was introduced by Arnold \cite{Arnold1957}, which arises in the study of the periodically-forced pendulum.
Given measurable functions $\alpha:\Omega\to[-1,1]$ and $\beta:\Omega\to \mathbb{R}$, we can form a \emph{random Arnold homeomorphism} by setting
\begin{align}\label{eq:Arnold}
f_\omega(x) = x+\frac{\alpha(\omega)}{2\pi}\sin(2\pi x) + \beta(\omega) \:\mathrm{mod}\:1
\end{align}
for all $x\in S^1$ and $\omega\in\Omega$. The requirement that $|\alpha(\omega)|\leq 1$ ensures that each lift of $f_\omega$ is strictly increasing, and so $f\in \mathcal{H}(\Omega)$. 

\subsection{Skew product formulation and acceleration}

An equivalent way of formulating a random circle homeomorphism is through a the concept of a skew product.
Consider a RCH $\phi:\mathbb{N}_0\times \Omega\times S^1\to S^1$ over the base dynamics $(\Omega,\mathcal{F},\mathbb{P},\sigma)$ with generator $f\in\mathcal{H}(\Omega)$. The \emph{associated skew product} is the map $\Phi:\Omega\times S^1\to\Omega\times S^1$ defined by
\begin{align*}
\Phi(\omega,x)=(\sigma\omega, f_\omega(x)),
\end{align*}
and so for $n\in\mathbb{N}_0$,
\begin{align*}
\Phi^n(\omega,x)=(\sigma^n\omega, f^{(n)}_\omega(x))= (\sigma^n\omega, \phi(n,\omega,x)).
\end{align*}

Let $\mathcal{P}_{\mathbb{P}}$ denote the set of probability measures $\mu$ on $\Omega\times S^1$ with marginal $\mathbb{P}$ on $\Omega$: that is for which ${\pi_\Omega}_\star \mu= \mathbb{P}$, where $\pi_\Omega$ is the natural projection onto $\Omega$. Let $\mathcal{I}(\Phi)$ denote the subset of $\mathcal{P}_{\mathbb{P}}$ of those probability measures that are invariant for the skew product $\Phi$. Since $S^1$ is a compact metric space and the fibre maps are continuous, the subset $\mathcal{I}(\Phi)$ is non-empty (see Arnold \cite{Arnold1998}). As shown by Ruffino--Rodrigues \cite{RodriguesRuffino2013}, for any $\mu\in\mathcal{I}(\Phi)$, the $\mathbb{P}$-mean value $\hat{\rho}(F)$ of the random rotation number of $F\in\tilde{\mathcal{H}}(\Omega)$ satisfies
\begin{align}\label{eqn:skewint}
\hat{\rho}(F)=\int_\Omega \rho_F\:\textrm{d}\mathbb{P} = \int_{\Omega\times S^1}\delta_F\:\textrm{d}\mu,
\end{align}
where $\delta_F:\Omega\times S^1\to\mathbb{R}$ is given by 
\begin{align}\label{eqn:smalldelta}
\delta_F(\omega,x)=\Delta_{F_\omega}(\pi^{-1}(x)).
\end{align}

Consider a measure-preserving dynamical system $(\Omega,\mathcal{F},\mathbb{P},\sigma)$ and let $k\in\mathbb{N}$. Then $\tau=\sigma^k:\Omega\to\Omega$, the $k$th iterate of $\sigma$ also preserves the probability measure $\mathbb{P}$ and so $(\Omega,\mathcal{F},\mathbb{P},\tau)$ is a measure-preserving dynamical system. Given $f\in\mathcal{H}(\Omega)$, the map $g=f^{(k)}\in\mathcal{H}(\Omega)$ given by
\begin{align}
g_\omega(x)=f^{(k)}_\omega(x)
\end{align}
for each $\omega\in\Omega$ and $x\in\mathbb{R}$ is called the \emph{$k$-fold acceleration} of $f$. We extend the definition to the RCH itself by saying the $k$-fold acceleration of the RCH $(\sigma,f)$ is $(\tau,g)=(\sigma^k,f^{(k)})$. The $k$-fold acceleration of a lift of a RCH can be defined in similar way.

For $k\in\mathbb{N}$, let $G=F^{(k)}\in\tilde{\mathcal{H}}$ be the $k$-fold acceleration of $F\in\tilde{\mathcal{H}}$. Then for each $n\in\mathbb{N}$,
\begin{align}
\frac{1}{n}G^{(n)}_\omega(x) = \frac{1}{n}F^{(nk)}_\omega(x)= k\, \frac{1}{nk}F^{(nk)}_\omega(x).
\end{align}
Hence, by applying Li and Lu's theorem, we have an analogous result to equation (\ref{eq:rhoiteration}):
\begin{align}
\rho_{F^{(k)}}(\omega)=k\, \rho_F(\omega) 
\end{align}
for $\mathbb{P}$-almost every $\omega\in\Omega$.

\section{Lift-free approaches to random rotation number}\label{sec:liftfree}

In this section we develop two methods for calculating the random rotation number of a random circle homeomorphism that do not involve taking lifts.

\subsection{Lift dependence of the random rotation number}
For a random circle homeomorphism $(\sigma,f)$, there are different ways that the dynamics can be lifted to the real line.
Rodrigues and Ruffino \cite{RodriguesRuffino2013} define  the \emph{$(q,\alpha)$-lift} $F\in\hat{\mathcal{H}}(\Omega)$ for each pair $(q,\alpha)\in\mathbb{R}^2$, where $F_\omega$ is the unique lift of $f_\omega$ that satisfies $F_\omega(q)\in [\alpha,\alpha+1)$ for each $\omega\in \Omega$. 
In particular, the \emph{standard lift} is the $(0,0)$-lift: that is, the unique lift that satisfies
\begin{align}\label{eq:standardlift}
F_\omega(0)\in [0,1)
\end{align}
for each $\omega\in \Omega$. 

The choice of lift affects the random rotation number, as can be demonstrated by the following example in the context of quasiperiodically-forced rotations.

\begin{ex}
Let $\Omega=S^1$ and let $\sigma:\Omega\to \Omega$ be the irrational rotation given by
$\sigma\omega=\omega+\xi\:\mathrm{mod}\:1$, for some irrational number $\xi\in\mathbb{R}\backslash\mathbb{Q}$. Let $f\in\mathcal{H}(\Omega)$ be the random rotation defined by $f_\omega(x)=x+\beta(\omega)\:\mathrm{mod}\:1$, where $\beta:\Omega\to\mathbb{R}$ is defined by
\begin{align}
\beta(\omega)=\begin{cases}
4\omega\quad & \textrm{for}\quad 0\leq \omega<1/2, \\
4-4\omega\quad & \textrm{for}\quad 1/2\leq \omega<1.
\end{cases}
\end{align}
We can consider various lifts of $f$.
For $k\in\mathbb{Z}$, let $F_1,F_2,F_3\in \hat{\mathcal{H}}(\Omega)$ be defined by
\begin{align*}
F_{1,\omega}(x) & =x+\beta(\omega)+k \\
F_{2,\omega}(x) & =x+\{\beta(\omega)\}+k \\
F_{3,\omega}(x) & =x+\left\{\beta(\omega)+\frac12\right\}-\frac12,
\end{align*}
where $\{x\}=x-\lfloor x\rfloor$ denotes the fractional part of $x$. These choices of lift each have their own advantages: $F_1$ exhibits continuous dependence on $\omega$, $F_2$ is the $(0,k)$-lift and so satisfies $F_{2,\omega}(0)\in [k,k+1)$ for all $\omega\in\Omega$, and $F_3$ has zero mean displacement (as demonstrated below).
Let $\mu$ denote the Lebesgue measure on $\Omega\times S^1$. Since $\sigma$ and each $f_\omega$ are isometries, $\mu$ is invariant for the associated skew product.
For each $i\in\{1,2,3\}$, we have that the displacement function $\Delta_{F_i}(\omega,x)=F_{i,\omega}(x)-x$ is independent of $x$, and so equation (\ref{eqn:skewint}) can be used to calculate the mean random rotation number:
\begin{align*}
\hat{\rho}(F_1) & = \int_{\Omega\times S^1} \delta_{F_1}\:\mathrm{d}\mu=\int_{\Omega} \beta\:\mathrm{d}\mathbb{P}+k=k+1, \\
\hat{\rho}(F_2) & = \int_{\Omega\times S^1} \delta_{F_2}\:\mathrm{d}\mu=\int_{\Omega} \{\beta\}\:\mathrm{d}\mathbb{P}+k=k+\frac{1}{2}, \\
\hat{\rho}(F_3) & = \int_{\Omega\times S^1} \delta_{F_3}\:\mathrm{d}\mu=\int_{\Omega} \left\{\beta+\frac12\right\}\:\mathrm{d}\mathbb{P}-\frac{1}{2}=0.
\end{align*}
\end{ex}

\subsection{Binary coding}
In this section we develop an orbit coding technique for random circle homeomorphisms, building on a method used in the deterministic case (see de Melo and van Strien \cite{deMelovanStrien1993}). 

Given a circle homeomorphism $f\in\mathcal{H}$, we let $\hat{f}:[0,1)\to [0,1)$ denote the equivalent interval map given by forgetting the identification of the endpoints of the interval. Let $c_f=f^{-1}(0)\in[0,1)$ and define the \emph{left interval} $I=[0,c_f)$ and the \emph{right interval} $J=[c_f,1)$.
If $f(0)\neq 0$, then the map $\hat{f}$ has a single discontinuity point, at $c_f\in(0,1)$, and two continuous and strictly increasing branches, supported on $I$ and $J$. If $f(0)=0$, then $c_f=0$ and map $\hat{f}$ is continuous and so has a single branch $I=[0,1)$, and $J=\emptyset$.
The orbit of a point $x_0\in[0,1)$ can be tracked by recording whether each iterate lies in the left or right interval, labeling visits to $I$ by $0$ and visits to $J$ by $1$. The resulting binary sequence encodes the dynamics of the orbit. 

We follow a similar approach in the random setting. Given $f\in \mathcal{H}(\Omega)$, let 
$\hat{f_\omega}$ denote the equivalent interval map to $f_\omega$. We let $c_\omega={f_\omega}^{-1}(0)$, so $c_\omega$ is either the unique discontinuity point of $\hat{f_\omega}$ or else $c_\omega=0$ if $\hat{f_\omega}$ is continuous. Let
\begin{align*}
J_\omega=\begin{cases}
[c_\omega,1) & \textrm{if}\ \hat{f}_\omega(0)\neq 0, \\
\emptyset & \textrm{if}\ \hat{f}_\omega(0)= 0,
\end{cases}
\end{align*}
denote the right interval for $\hat{f}_\omega$ and let $I_\omega=[0,1)\backslash J_\omega$ denote the left interval.
Let $S_\omega:[0,1)\to\mathbb{R}$ denote the indicator function of the interval $J_\omega$: that is $S_\omega(x)=\chi_{J_\omega}(x)$ for $x\in [0,1)$.

Given $x_0\in [0, 1)$ and $\omega\in\Omega$ and $n\in \mathbb{N}$, we define the $n$th \emph{binary coding frequency}
\begin{align}\label{eq:binary}
B_f(n,\omega,x_0)=\frac{1}{n}\sum_{k=0}^{n-1} S_{\sigma^{k}\omega}(\hat{f}^{(k)}_\omega(x_0)),
\end{align}
which measures the frequency with which the orbit of $x_0$ visits the right interval. The following theorem shows that the limiting average of the binary coding frequency exists almost surely and is equal to the random rotation number of the standard lift.

\begin{thm}\label{thm:binarycoding}
Let $(\Omega, \mathcal{F}, \mathbb{P}, \sigma)$ be a measure-preserving dynamical system. Let $f\in \mathcal{H}(\Omega)$ and let $F\in\tilde{\mathcal{H}}$ be the standard lift of $f$. Then, for each $x_0\in [0,1)$ and $n\in\mathbb{N}$, 
\begin{align}\label{eq:bitcountingerror}
\left|\frac{1}{n}F^{(n)}_{\omega}(x_0)-B_f(n,\omega,x_0)\right|< \frac{1}{n}
\end{align}
and for $\mathbb{P}$-almost every $\omega\in\Omega$,
\begin{align}
\lim_{n\rightarrow \infty}B_f(n,\omega,x_0)= \rho_F(\omega).
\end{align}
\end{thm}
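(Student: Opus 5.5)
The plan is to show that the standard lift moves an orbit up by exactly one integer each time the orbit passes through a right interval $J_{\sigma^k\omega}$. Writing $x_k=\hat f^{(k)}_\omega(x_0)\in[0,1)$, the target is the exact identity
\begin{align*}
F^{(n)}_\omega(x_0) = x_n + \sum_{k=0}^{n-1} S_{\sigma^k\omega}(x_k) = x_n + nB_f(n,\omega,x_0).
\end{align*}
Since $x_0\in[0,1)$ and $x_n\in[0,1)$, dividing by $n$ gives $\bigl|\tfrac1n F^{(n)}_\omega(x_0) - B_f(n,\omega,x_0)\bigr| = x_n/n < 1/n$, which is (\ref{eq:bitcountingerror}).

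The key step is a one-step lemma: for a single $f_\omega$ with standard lift $F_\omega$ (so $F_\omega(0)\in[0,1)$) and any $x\in[0,1)$,
\begin{align*}
F_\omega(x) = \hat f_\omega(x) + S_\omega(x).
\end{align*}
To prove it, first use monotonicity and the degree one property to get $F_\omega(x)\in[F_\omega(0),F_\omega(0)+1)\subset[0,2)$. Then split into cases.
\begin{itemize}
\item If $f_\omega(0)=0$, then $F_\omega(0)=0$. Hence $F_\omega$ maps $[0,1)$ onto $[0,1)$, so $F_\omega(x)=\hat f_\omega(x)$, and $S_\omega\equiv 0$ because $J_\omega=\emptyset$.
\item Otherwise $F_\omega(0)\in(0,1)$. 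The point $c_\omega$ is the unique point of $[0,1)$ with $F_\omega(c_\omega)=1$, which follows from continuity, strict monotonicity and $F_\omega(0)<1<F_\omega(1)$. So $F_\omega(x)<1$ exactly when $x<c_\omega$, giving $F_\omega(x)=\hat f_\omega(x)$ on $I_\omega$ and $F_\omega(x)=\hat f_\omega(x)+1$ on $J_\omega$.
\end{itemize}
This is the main point requiring care: checking that the conventions for $c_\omega$, $J_\omega$ and the standard lift match exactly.

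Next I induct on $n$. I use the cocycle property $F^{(n+1)}_\omega = F_{\sigma^n\omega}\circ F^{(n)}_\omega$ for the lifts, where $F^{(n)}$ is the composition of the fibre lifts, and the degree one property in the form $F(y+m)=F(y)+m$ for $m\in\mathbb{Z}$. If $F^{(n)}_\omega(x_0)=x_n+m_n$ with $m_n=\sum_{k<n}S_{\sigma^k\omega}(x_k)$, then
\begin{align*}
F^{(n+1)}_\omega(x_0)=F_{\sigma^n\omega}(x_n)+m_n = x_{n+1}+S_{\sigma^n\omega}(x_n)+m_n.
\end{align*}
This uses that $\hat f_{\sigma^n\omega}(x_n)=x_{n+1}$, which holds by the definition of $\hat f^{(k)}$ as the composition of the interval maps.

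For the limit, the standard lift satisfies $\|\Delta_{F_\omega}\|\le 2$, since $F_\omega(x)-x\in[F_\omega(0)-1,F_\omega(0)+1)$. Hence $F\in\tilde{\mathcal H}(\Omega)$, and the measurability of $\omega\mapsto F_\omega$ is inherited from $f$. The Theorem (Li--Lu) then gives $\tfrac1n(F^{(n)}_\omega(x_0)-x_0)\to\rho_F(\omega)$ almost surely. Since $x_0/n\to0$, the bound (\ref{eq:bitcountingerror}) yields $B_f(n,\omega,x_0)\to\rho_F(\omega)$ for $\mathbb{P}$-almost every $\omega$.
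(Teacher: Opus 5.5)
Your proposal is correct and follows essentially the same route as the paper: the one-step identity $F_\omega(x)=\hat f_\omega(x)+S_\omega(x)$ on $[0,1)$, induction giving $F^{(n)}_\omega(x_0)=\hat f^{(n)}_\omega(x_0)+nB_f(n,\omega,x_0)$, the bound from $0\le \hat f^{(n)}_\omega(x_0)<1$, and then the Li--Lu theorem for the limit. Your case analysis proving the one-step identity and your check that the standard lift is integrable ($\|\Delta_{F_\omega}\|\le 2$, so Li--Lu applies) fill in details that the paper simply asserts.
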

\begin{proof}
For each $x\in \mathbb{R}$ and $\omega\in \Omega$, the standard lift satisfies $F_{\omega}(x)=\hat{f}_\omega(\pi(x))+S_{\omega}(\pi(x))+\lfloor x\rfloor$ where $\hat{f}$ and $S_\omega$ are defined as before. So for each $n\in \mathbb{N}$, by induction,
\begin{align*}
F^{(n)}_{\omega}(x)&=\hat{f}^{(n)}_\omega(\pi(x))+\sum_{k=0}^{n-1}S_{\sigma^k\omega}(\hat{f}^{(k)}_\omega(\pi(x)))+\lfloor x \rfloor\,.
\end{align*}
Therefore
\begin{align*}
\frac{1}{n}F^{(n)}_{\omega}(x) =
\frac{1}{n}\hat{f}^{(n)}_\omega(\pi(x))+B_f(n,\omega,\pi(x))+\frac{1}{n}\lfloor x \rfloor.
\end{align*}
For $x_0\in [0, 1)$, since $0 \leq \hat{f}^{(n)}_{\omega}(x_0)<1$ for all $\omega\in \Omega$ and $n\in \mathbb{N}$, we have
\begin{align}
\left|\frac{1}{n}F^{(n)}_{\omega}(x_0)-B_f(n,\omega,x_0)\right| =\frac{\hat{f}^{(n)}_{\omega}(x_0)}{n}< \frac{1}{n}.
\end{align}
Taking the limit, by (\ref{eqn:randomrotationnumber}) we have
\begin{align}
\lim_{n\rightarrow \infty}B_f(n,\omega,x_0)=\rho_F(\omega)
\end{align}
for $\mathbb{P}$-almost every $\omega\in\Omega$.
\end{proof}

\subsection{Visit counting}

Building on an idea used by Guckenheimer and Holmes \cite{GuckenheimerHolmes1983} in the deterministic setting, another approach is to count the number of visits to a fundamental domain. Given a homeomorphism $f\in \mathcal{H}$ and a reference point $z\in S^1$ that is not fixed by $f$, consider the fundamental domain interval $[z,f(z))\subset S^1$. (Note that if $0\leq f(z)<z<1$, then this notation refers to the connected set $[z,1)\cup[0,f(z))$.) Each time that an orbit completes one lap of the circle, it visits the fundamental domain precisely once, and so the rotation number can be calculated in terms of the asymptotic frequency of visits to the fundamental domain.

We can take a similar approach in the random setting. Given $f\in \mathcal{H}(\Omega)$ and $x_0,z\in S^1$, for each $n\in \mathbb{N}$ and $\omega\in\Omega$ we define the $n$th \emph{visit frequency} to be
\begin{align}\label{eq:visit}
V_f(n,\omega,x_0,z) = \frac{1}{n}\# \{1\leq i\leq n: f^{(i)}_\omega(x_0)\in [z,f_{\sigma^{i-1}\omega}(z))\}.
\end{align}
In the case $a=b$, the interval $[a,b)\subset S^1$ is defined to be the empty set.

\begin{thm}\label{thm:visitcounting}
Let $(\Omega, \mathcal{F}, \mathbb{P}, \sigma)$ be a measure-preserving dynamical system. Let $f\in \mathcal{H}(\Omega)$ and let $F\in\tilde{\mathcal{H}}$ be the standard lift of $f$. Then, for each $x_0\in S^1$ and $n\in\mathbb{N}$, 
\begin{align}
V_f(n,\omega,x_0,0)=B_f(n,\omega,x_0),
\end{align}
and for $\mathbb{P}$-almost every $\omega\in\Omega$,
\begin{align}\label{eq:visitwith0}
\lim_{n\rightarrow \infty}V_f(n,\omega,x_0,0)= \rho_F(\omega).
\end{align}
Moreover, if $f$ has no fixed points, then for any $z\in S^1$ and $\mathbb{P}$-almost every $\omega\in\Omega$,
\begin{align}\label{eq:visitwithz}
\lim_{n\rightarrow \infty}V_f(n,\omega,x_0,z)= \rho_F(\omega).
\end{align}
\end{thm}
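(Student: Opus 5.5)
The plan is to prove the identity $V_f(n,\omega,x_0,0)=B_f(n,\omega,x_0)$ term by term, and then deduce (\ref{eq:visitwith0}) from Theorem \ref{thm:binarycoding}. Finally, (\ref{eq:visitwithz}) will follow by comparing visits to $[z,f_{\sigma^{i-1}\omega}(z))$ with the lap count of a lift.

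For the identity, fix $\omega$ and put $y_{k}=\hat f^{(k)}_\omega(x_0)$ for $0\le k\le n-1$. I will show that $S_{\sigma^k\omega}(y_k)=1$ exactly when $f_{\sigma^k\omega}(y_k)\in[0,f_{\sigma^k\omega}(0))$. Reindexing with $i=k+1$, so that $f^{(i)}_\omega(x_0)=f_{\sigma^{i-1}\omega}(y_{i-1})$, then gives the equality of the two counts. Write $g=f_{\sigma^k\omega}$ and $c=g^{-1}(0)$.

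\emph{Case $g(0)=0$.} Then $J=\emptyset$ and the interval $[0,g(0))$ is empty by convention, so both indicators vanish.

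\emph{Case $g(0)\neq 0$.} Since $g$ is an orientation-preserving homeomorphism, it maps the arc $[c,1)$, namely from $c$ counterclockwise up to $0$ exclusive, onto the arc $[g(c),g(0))=[0,g(0))$. It maps the arc $[0,c)$ onto $[g(0),1)$. Hence $y\in J$ if and only if $g(y)\in[0,g(0))$. This is the same fact as in the proof of Theorem \ref{thm:binarycoding}: the standard lift adds $1$ precisely on $J$. Once the identity is established, (\ref{eq:visitwith0}) is immediate from Theorem \ref{thm:binarycoding}.

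For general $z$, the idea is to conjugate by a rotation. Let $R(x)=x+z$ and $g_\omega=R^{-1}\circ f_\omega\circ R$. Then $g^{(i)}_\omega(x_0-z)=f^{(i)}_\omega(x_0)-z$, and the visit condition $f^{(i)}_\omega(x_0)\in[z,f_{\sigma^{i-1}\omega}(z))$ becomes $g^{(i)}_\omega(x_0-z)\in[0,g_{\sigma^{i-1}\omega}(0))$. Therefore
\[
V_f(n,\omega,x_0,z)=V_g(n,\omega,x_0-z,0).
\]
By the first part, this converges to $\rho_G(\omega)$, where $G$ is the standard lift of $g$, i.e. $G_\omega$ is the $(z,z)$-lift of $f$ conjugated by translation. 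It remains to show $\rho_G=\rho_F$ almost surely. Since $\rho$ of a conjugated lift equals $\rho$ of the corresponding lift $F'$ of $f$ with $F'_\omega(z)\in[z,z+1)$, it suffices to show that the $(z,z)$-lift $F'$ and the standard lift $F$ have the same rotation number almost surely.

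This last comparison is where the hypothesis of no fixed points is used, and I expect it to be the main obstacle. By continuity and the degree-one property, the displacement $\Delta_{F_\omega}$ has range of width less than $1$ and never takes an integer value. For $F$ its value at $0$ lies in $[0,1)$, hence in $(0,1)$, so $\Delta_{F_\omega}(x)\in(0,1)$ for all $x$. The same argument at $z$ gives $\Delta_{F'_\omega}\in(0,1)$ everywhere. Since $F'_\omega-F_\omega$ is an integer, it must be $0$, so $F'=F$ and the limit is $\rho_F$.

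If the no-fixed-point hypothesis is meant only almost surely, this equality holds for a.e.\ $\omega$. The lifts then coincide along a.e.\ orbit, since $\sigma$ preserves $\mathbb{P}$ and a countable union of null sets is null, and Li--Lu gives the same almost-sure limit. A residual subtlety is the endpoint convention when $f_\omega(z)=z$ for some $\omega$, which the hypothesis excludes.
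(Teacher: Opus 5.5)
Your proof is correct. The first part is essentially the paper's argument: reindex, use the fact that $f_{\sigma^k\omega}$ maps $J_{\sigma^k\omega}$ onto the arc $[0,f_{\sigma^k\omega}(0))$, and then appeal to Theorem~\ref{thm:binarycoding}.

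For a general reference point $z$ you take a genuinely different route.

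\emph{The paper's route.} It stays on the circle. It rewrites the visit condition as $x_0\in J_{i,\omega,z}=[(f^{(i)}_\omega)^{-1}(z),(f^{(i-1)}_\omega)^{-1}(z))$. These arcs chain along the backward orbit of $z$, and they are all nonempty because no $f_\omega$ has a fixed point. The paper then argues that replacing $z$ by a nearby $z'$ changes the visit count by at most one.

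\emph{Your route.} You conjugate by the rotation $x\mapsto x+z$, which gives $V_f(n,\omega,x_0,z)=V_g(n,\omega,x_0-z,0)$. This shows that the visit frequency always converges to the rotation number of the $(z,z)$-lift. You use the no-fixed-point hypothesis only at the last step: it forces both the standard lift and the $(z,z)$-lift to have displacement in $(0,1)$, so the two lifts coincide.

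\emph{What each approach buys.} Your version isolates exactly where the hypothesis enters, and it identifies the limit even when fixed points are present (it is $\rho$ of the $(z,z)$-lift, which can differ from $\rho_F$). Applying Theorem~\ref{thm:binarycoding} to the conjugated system also gives you an explicit $O(1/n)$ comparison between $V_f(n,\omega,x_0,z)$ and $\frac{1}{n}F^{(n)}_\omega(x_0)$ for every $z$.

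By contrast, the paper's comparison of $z$ with a nearby $z'$ requires a closeness that depends on $n$ and $\omega$. Passing from $z$ all the way to $0$ with a bound uniform in $n$ therefore needs extra bookkeeping, which your argument avoids. The paper's approach has the merit of staying lift-free, in keeping with its theme, and of giving a concrete combinatorial picture of the visit counts through the backward orbit of $z$.
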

\begin{proof}
For each $n\in\mathbb{N}$, we have
\begin{align*}
V_f(n,\omega,x_0,0) & = \#\{1\leq i \leq n: f^{(i)}_\omega(x_0)\in [0,f_{\sigma^{i-1}\omega}(0)) \}/n \\
& = \#\{0\leq i \leq n-1: f^{(i+1)}_\omega(x_0)\in [0,f_{\sigma^{i}\omega}(0)) \}/n \\
& = \#\{0\leq i \leq n-1: f^{(i)}_\omega(x_0)\in [(f_{\sigma^i\omega})^{-1}(0),1) \}/n \\
& = \#\{0\leq i \leq n-1: f^{(i)}_\omega(x_0)\in J_{\sigma^i\omega} \}/n \\
& = B_f(n,\omega,x_0).
\end{align*}
Hence the limit follows from Theorem \ref{thm:binarycoding}.

Now suppose $f$ has no fixed points and fix $z\in S^1$. Then for each $i\in\mathbb{N}$,  $J_{i,\omega,z}=[(f^{(i)}_\omega)^{-1}(z),(f^{(i-1)}_\omega)^{-1}(z))$ is a non-empty interval, and the endpoints depend continuously on $z$. We have that for all $x\in S^1$, $z\in A$, $1\leq i \leq n$ and almost every~$\omega\in \Omega$,
\begin{align} 
f^{(i)}_\omega(x)\in [z, f_{\sigma^{i-1}\omega}(z)) \quad\textrm{if and only if}\quad  x \in J_{i, \omega, z}.
\end{align}
The set of endpoints of the intervals of the collection $\{J_{i, \omega, z}\}_{i=1}^n$ consists of the points $\{z, (f_\omega)^{-1}(z), \ldots, (f^{(n)}_\omega)^{-1}(z)\}$ of the backward orbit of $z$. For each $i\in \{1, \ldots, n\}$, the left endpoint of the interval $J_{i, \omega, z}$ coincides with the right endpoint of the interval $J_{i+1, \omega, z}$. Therefore, the only possible discontinuities of the function $x\mapsto V_f(n,\omega, x, z)$ are the first and last points of the list, $z$ and $(f^{(n)}_\omega)^{-1}(z)$. Suppose that $z'\in S^1$ is close to $z$. Then the backward orbits of $z$ and $z'$ satisfy
\begin{align}
(f^{(i-1)}_\omega)^{-1}(z)\in J_{i, \omega, z'}
\end{align}
for each $1\leq i \leq n$. Likewise, for $i\in \{1,\ldots,n\}$, we have $(f^{(i)}_\omega)^{-1}(z')\in J_{i, \omega, z}$. Hence, the integers 
\begin{align*}
\#\{1 \leq i \leq n  : f^{(i)}_\omega(x_0)\in [z,f_{\sigma^{i-1}\omega}(z))\}
\  \textrm{and}\ 
\#\{1 \leq i \leq n : f^{(i)}_\omega(x_0)\in [z',f_{\sigma^{i-1}\omega}(z'))\}
\end{align*}
differ by at most 1. Thus by (\ref{eq:visitwith0}) for $\mathbb{P}$-almost every~$\omega\in\Omega$, we have 
\begin{align}
\lim_{n\rightarrow \infty}V_f(n,\omega,x_0,z)= \rho_F(\omega).
\end{align}
\end{proof}

\section{Error bounds for the mean random rotation number}\label{sec:errorbounds}

The random rotation number $\rho_F$ is defined as a limit, which raises the question of the extent to which it can be approximated by finitely many terms. 
The following example, based on the work of Godr\`eche et al.~\cite{Godrecheetal1987}, illustrates the difficulties encountered when approximating the random rotation number.

\begin{ex}\label{ex:Randomrotation}
Let $\sigma:\Omega\to\Omega$ denote the irrational rotation by $\tau=(3-\sqrt{5})/2\notin\mathbb{Q}$, which preserves the Lebesgue measure $\mathbb{P}$ on the circle $\Omega=S^1$. Let $F\in\hat{H}(\Omega)$ be given by $f_\omega(x)=x+R(\omega)$, where $R:\Omega\to\mathbb{R}$ is given by
\begin{align}
R(\omega)=\begin{cases}
+1 \quad \omega\in [0,1/2), \\
-1 \quad \omega\in [1/2,1).
\end{cases}
\end{align}
Thus $(\sigma,F)$ is the lift of a random rotation $(\sigma,f)$, and so two-dimensional Lebesgue measure $\mu$ on $\Omega\times S^1$ is invariant under the associated skew product. Since $\delta_F(\omega,x)=R(\omega)$ is independent of $x\in S^1$, by (\ref{eqn:skewint}) we have that the random rotation number $\rho_F$ is zero $\mathbb{P}$-almost everywhere.
Godreche et al.~\cite{Godrecheetal1987} show that
\begin{align}
F^{(n)}_0(0)=\sum_{i=1}^n R(\sigma^i\omega)   
\end{align}
is unbounded. More specifically, they show that for each $m\in \mathbb{N}$, we have $F^{(n)}_0(0)=m$ when $n=\sum_{k=1}^m a_{6k-4}$, where $(a_n)$ is the Fibonacci sequence defined by $a_n=a_{n-1}+a_{n-2}$, $a_0=0$ and $a_1=1$. In particular, the subsequence of `record highs' of $F^{(n)}_0(0)$ begins with $F^{(1)}_0(0)=1$, $F^{(22)}_0(0)=2$, $F^{(399)}_0(0)=3$ and $F^{(7164)}_0(0)=4$. 
Let $A_n$ denote the approximation $F^{(n)}_0(0)/n$ of the random rotation number. Figure \ref{fig:Randomrotation} shows a plot of $A_n$ against $n$, in which we can observe that for $n\leq 10000$, the points $A_n$ lie on curves of the form $c/n$, where $c$ is an integer in the set $\{-3,\ldots,4\}$.

\begin{figure}[htb]
    \centering
    \includegraphics[scale=1.00]{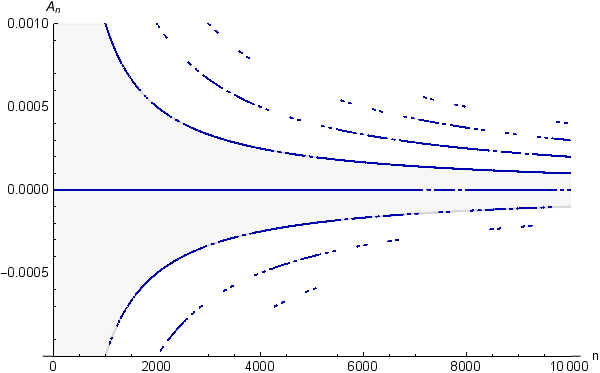}  
    \caption{Graph showing approximations $A_n$ of the random rotation number against $n$, and the region bounded by the curves $\pm1/n$ (light grey).}
    \label{fig:Randomrotation}
\end{figure}
\end{ex}

Example \ref{ex:Randomrotation} shows that, in the random case, there is no equivalent of the $1/n$ error bound provided by equation (\ref{eq:rhodesthompson}), and in fact no $C/n$ bound for any $C>0$. Therefore, it is not advisable to try to estimate the random rotation number by iterating a single trajectory.

A more effective approach is to estimate the random rotation number by averaging over all trajectories. Stark et al.~\cite{Starketal2002} prove that for quasiperiodically-forced systems where the base is an irrational rotation (thus preserving the Lebesgue measure $\mathbb{P}$) and the generator depends continuously on $\omega$, the mean rotation number $\hat{\rho}(F)$ satisfies
\begin{align}
\left|\int_\Omega \frac{F^{(n)}_\omega(x)-x}{n}\:\mathrm{d}\mathbb{P}(\omega)   -\hat{\rho}(F) \right| \leq \frac{1}{n}
\end{align}
for each $n\in\mathbb{N}$ and $x\in\mathbb{R}$.

In this section, we follow an approach similar to that of Stark et al.~\cite{Starketal2002} to prove an analogous result for random circle homeomorphisms, namely, that the mean random rotation number $\hat{\rho}(F)$ is well-approximated by $(1/n)R_n$, where
\begin{align}\label{eq:intapprox}
R_n = \int_\Omega F^{(n)}_\omega(0)\:\mathrm{d}\mathbb{P}(\omega),
\end{align}
with an error of at most $1/n$. For $n\in\mathbb{N}$, let $\delta^{(n)}_F:\Omega\times S^1\to\mathbb{R}$ denote the map $\delta^{(n)}_F(\omega,x)=\delta_{F^{(n)}}(\omega,x)=\Delta_{F^{(n)}_\omega}(\pi^{-1}(x))$. We shall need the following results about the basic properties of  $\delta_F^{(n)}$.

\begin{lem}\label{lem:hermanbound}
Let $(\Omega, \mathcal{F}, \mathbb{P}, \sigma)$ be a measure-preserving dynamical system. Let $f\in \mathcal{H}(\Omega)$ and let $F\in\tilde{\mathcal{H}}(\Omega)$ be a lift of $f$. Then for each $\omega\in\Omega$, $x,y\in S^1$ and $n\in\mathbb{N}$, we have
\begin{align}
\left|\delta_{F}^{(n)}(\omega,x)-\delta_{F}^{(n)}(\omega,y)\right| < 1.
\end{align}
\end{lem}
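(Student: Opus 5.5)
The plan is to reduce the lemma to a fact about a single lift. Fix $\omega$ and $n$, and set $G=F^{(n)}_\omega$. Since each $F_{\sigma^k\omega}$ lies in $\tilde{\mathcal{H}}$, so does the composition $G$: a composition of increasing homeomorphisms of $\mathbb{R}$ with the degree one property (\ref{eq:degreeone}) again has both properties. The quantity to bound is then $|\Delta_G(\tilde x)-\Delta_G(\tilde y)|$, where $\tilde x,\tilde y$ are lifts of $x,y$.

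\textbf{Reduction to one period.} By (\ref{eq:degreeone}), $\Delta_G$ is $1$-periodic. Hence $\delta^{(n)}_F(\omega,x)$ does not depend on the representative of $\pi^{-1}(x)$. We may therefore choose representatives $\tilde x,\tilde y\in\mathbb{R}$ with $\tilde x\le \tilde y<\tilde x+1$.

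\textbf{The key estimate.} This step uses monotonicity of $G$ together with the degree one property. From $\tilde x\le\tilde y<\tilde x+1$ and strict monotonicity of $G$ we get
\begin{align*}
G(\tilde x)\le G(\tilde y)<G(\tilde x+1)=G(\tilde x)+1.
\end{align*}
Subtract $\tilde y$ in the middle and note that $\tilde x\le\tilde y<\tilde x+1$ implies $-\tilde x-1<-\tilde y\le-\tilde x$. This gives two bounds. The upper bound is
\begin{align*}
\Delta_G(\tilde y)=G(\tilde y)-\tilde y< G(\tilde x)+1-\tilde x=\Delta_G(\tilde x)+1.
\end{align*}
The lower bound is
\begin{align*}
\Delta_G(\tilde y)=G(\tilde y)-\tilde y> G(\tilde x)-\tilde x-1=\Delta_G(\tilde x)-1.
\end{align*}
For the lower bound, the strictness comes from $-\tilde y>-\tilde x-1$, which holds because $\tilde y<\tilde x+1$. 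Together these give $|\Delta_G(\tilde x)-\Delta_G(\tilde y)|<1$, which is the claim.

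\textbf{Main obstacle.} There is little difficulty here. The main care point is keeping both inequalities strict, which relies on the half-open choice of representatives $\tilde y\in[\tilde x,\tilde x+1)$. The other point to state explicitly is that $F^{(n)}_\omega\in\tilde{\mathcal{H}}$, so that the degree one property holds for the composition. Measurability and integrability of $F$ play no role; the bound holds for every $\omega$.
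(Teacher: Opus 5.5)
Your proof is correct and follows essentially the same route as the paper: choose representatives within one period, then combine strict monotonicity of $F^{(n)}_\omega$ with the degree one property to obtain the two strict one-sided bounds. The differences are cosmetic. You allow $\tilde x=\tilde y$ where the paper treats $x=y$ separately, and you state explicitly that $F^{(n)}_\omega\in\tilde{\mathcal{H}}$, which the paper uses implicitly.
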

\begin{proof}
If $x=y$, we are done, so we assume $x\neq y$. We can take $x',y'\in\mathbb{R}$, such that $x'<y'<x'+1$ and $\pi(x')=x$ and $\pi(y')=y$. Then by the strict monotonicity of the lift, we have
\begin{align*}
\Delta_{F^{(n)}_\omega}(x')=F^{(n)}_\omega(x')-x'< F^{(n)}_\omega(y')-x'<F^{(n)}_\omega(y')-y'+1=\Delta_{F^{(n)}_\omega}(y')+1.
\end{align*}
Similarly, we have
\begin{align*}
\Delta_{F^{(n)}_\omega}(x')=\Delta_{F^{(n)}_\omega}(x'+1)=F^{(n)}_\omega(x'+1)-x'-1>F^{(n)}_\omega(y')-x'-1>F^{(n)}_\omega(y')-y'-1=\Delta_{F^{(n)}_\omega}(y')-1.
\end{align*}
Combining these inequalities, we find
\begin{align*}
\left| \Delta_{F^{(n)}_\omega}(x')-\Delta_{F^{(n)}_\omega}(y')\right|< 1,
\end{align*}
and thus, projecting to the circle, we obtain
\begin{align*}
\left| \delta_{F}^{(n)}(\omega,x)-\delta_F^{(n)}(\omega,y)\right|< 1.
\end{align*}
\end{proof}

\begin{lem}\label{lem:devbound}
Let $(\Omega, \mathcal{F}, \mathbb{P}, \sigma)$ be a measure-preserving dynamical system. Let $f\in \mathcal{H}(\Omega)$ and let $F\in\tilde{\mathcal{H}}(\Omega)$ be a lift of $f$. Then for each $\omega\in\Omega$, $x,y\in S^1$ and $n,m\in\mathbb{N}$, we have
\begin{align}
\left|\delta_{F}^{(n+m)}(\omega,x)-\delta_{F}^{(m)}(\sigma^n\omega,y)-\delta_{F}^{(n)}(\omega,x)\right| < 1.
\end{align}
\end{lem}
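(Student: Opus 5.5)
We use the cocycle property of the displacement, which reduces the statement to Lemma \ref{lem:hermanbound}. Lift $x$ to $x'\in\mathbb{R}$ with $\pi(x')=x$. By the cocycle property of the lift, $F^{(n+m)}_\omega=F^{(m)}_{\sigma^n\omega}\circ F^{(n)}_\omega$. Put $z'=F^{(n)}_\omega(x')$. Then
\begin{align*}
\Delta_{F^{(n+m)}_\omega}(x') = F^{(m)}_{\sigma^n\omega}(z')-z' + z'-x' = \Delta_{F^{(m)}_{\sigma^n\omega}}(z')+\Delta_{F^{(n)}_\omega}(x').
\end{align*}
Both displacement functions are $1$-periodic. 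Projecting to the circle therefore gives the exact identity
\begin{align*}
\delta_F^{(n+m)}(\omega,x)=\delta_F^{(m)}(\sigma^n\omega,f^{(n)}_\omega(x))+\delta_F^{(n)}(\omega,x).
\end{align*}

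Substituting this identity into the quantity in the statement leaves
\begin{align*}
\left|\delta_F^{(m)}(\sigma^n\omega,f^{(n)}_\omega(x))-\delta_F^{(m)}(\sigma^n\omega,y)\right|.
\end{align*}
Apply Lemma \ref{lem:hermanbound} with base point $\sigma^n\omega$, iterate $m$, and the two circle points $f^{(n)}_\omega(x)$ and $y$. It gives that this difference is strictly less than $1$, which is the claimed bound.

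The only point needing care is that $\delta^{(m)}_F$ is well defined on $S^1$, i.e.\ independent of the choice of preimage under $\pi$. This holds because $\Delta_{F^{(m)}_{\sigma^n\omega}}$ is $1$-periodic, by the degree one property (\ref{eq:degreeone}) applied to the composition of lifts. With that checked, the argument is just the cocycle identity followed by Lemma \ref{lem:hermanbound}.
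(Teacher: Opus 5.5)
Your proposal is correct and follows the paper's proof essentially line for line. Like the paper, you derive the cocycle identity $\delta_F^{(n+m)}(\omega,x)=\delta_F^{(m)}(\sigma^n\omega,f^{(n)}_\omega(x))+\delta_F^{(n)}(\omega,x)$ from $F^{(n+m)}_\omega=F^{(m)}_{\sigma^n\omega}\circ F^{(n)}_\omega$, then apply Lemma \ref{lem:hermanbound} at base point $\sigma^n\omega$. Your explicit check that $\delta^{(m)}_F$ is well defined via $1$-periodicity is a small point the paper leaves implicit.
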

\begin{proof}

For $x\in S^1$, take any $z\in\mathbb{R}$ such that $\pi(z)=x$. By item 3 of the definition of a random circle homeomorphism, we have that the lift $F$ satisfies 
\begin{align*}
F^{(n+m)}_\omega(z)  = F^{(m)}_{\sigma^n\omega}(F^{(n)}_\omega(z)) 
& \implies F^{(n+m)}_\omega(z)-z  = F^{(m)}_{\sigma^n\omega}(F^{(n)}_\omega(z)) - F^{(n)}_\omega(z) +F^{(n)}_\omega(z)-z \\
& \implies \Delta_{F^{(n+m)}_\omega}(z)  = \Delta_{F^{(m)}_{\sigma^n\omega}}(F^{(n)}_\omega(z))+\Delta_{F^{(n)}_\omega}(z) \\
& \implies \delta_{F}^{(n+m)}(\omega,\pi(z))   = \delta_{F}^{(m)}(\sigma^n\omega,\pi(F^{(n)}_\omega(z)))+\delta_{F}^{(n)}(\omega,\pi(z)) \\
& \implies \delta_{F}^{(n+m)}(\omega,x)   = \delta_{F}^{(m)}(\sigma^n\omega,f^{(n)}_\omega(x))+\delta_{F}^{(n)}(\omega,x).
\end{align*}
By Lemma \ref{lem:hermanbound}, we have $|\delta_F^{(m)}(\sigma^n\omega,f^{(n)}_\omega(x))-\delta_F^{(m)}(\sigma^n\omega,y)|<1$ for any $y\in S^1$, which gives
\begin{align*}
\left|\delta_{F}^{(n+m)}(\omega,x)-\delta_{F}^{(m)}(\sigma^n\omega,y)-\delta_{F}^{(n)}(\omega,x)\right| < 1.
\end{align*}
\end{proof}

Using these lemmas, we can prove the following bound for the mean random rotation number.

\begin{thm}\label{thm:1N}
Let $(\Omega, \mathcal{F}, \mathbb{P}, \sigma)$ be a measure-preserving dynamical system. Let $f\in \mathcal{H}(\Omega)$ and let $F\in\tilde{\mathcal{H}}(\Omega)$ be a lift of $f$. Then, for each $n\in\mathbb{N}$, the mean random rotation number $\hat{\rho}(F)$ satisfies
\begin{align}\label{eq:mean1nbound}
\left| \frac{1}{n}\int_{\Omega}F^{(n)}_\omega(0)\:\mathrm{d}\mathbb{P} - \hat{\rho}(F) \right| \leq \frac{1}{n}.
\end{align}
\end{thm}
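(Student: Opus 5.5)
The plan is to pass from the integral of $F^{(n)}_\omega(0)$ to an integral of the displacement $\delta^{(n)}_F$ against an invariant measure, and then exploit approximate additivity of $\delta^{(n)}_F$ along the skew product. First note $F^{(n)}_\omega(0)=\delta^{(n)}_F(\omega,0)$, so $R_n=\int_\Omega\delta^{(n)}_F(\omega,0)\,\mathrm{d}\mathbb{P}$. Fix any $\mu\in\mathcal{I}(\Phi)$, which exists as noted earlier. By Lemma \ref{lem:hermanbound}, $|\delta^{(n)}_F(\omega,x)-\delta^{(n)}_F(\omega,0)|<1$ for every $x$. Integrating over $\mu$, whose marginal is $\mathbb{P}$, gives
\begin{align*}
\left|R_n-\int_{\Omega\times S^1}\delta^{(n)}_F\,\mathrm{d}\mu\right|\leq 1.
\end{align*}
Integrability of $\delta^{(n)}_F$ follows from the cocycle identity below together with $\|\Delta_{F_\omega}\|\in L^1$ and invariance of $\mathbb{P}$.

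It therefore suffices to show $\int\delta^{(n)}_F\,\mathrm{d}\mu=n\hat\rho(F)$. The proof of Lemma \ref{lem:devbound} gives the exact cocycle identity
\begin{align*}
\delta^{(n+m)}_F(\omega,x)=\delta^{(m)}_F(\Phi^n(\omega,x))+\delta^{(n)}_F(\omega,x).
\end{align*}
Iterating with $m=1$ yields $\delta^{(n)}_F=\sum_{k=0}^{n-1}\delta_F\circ\Phi^k$. Since $\mu$ is $\Phi$-invariant, integrating gives $\int\delta^{(n)}_F\,\mathrm{d}\mu=n\int\delta_F\,\mathrm{d}\mu=n\hat\rho(F)$ by (\ref{eqn:skewint}). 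Dividing the displayed bound by $n$ then gives (\ref{eq:mean1nbound}).

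The main subtlety is the strict-versus-nonstrict inequality and measurability. The pointwise bound from Lemma \ref{lem:hermanbound} is strict, but after integration only $\leq 1$ survives, which is all the statement requires. One must also check that $(\omega,x)\mapsto\delta^{(n)}_F(\omega,x)$ is jointly measurable so that the integrals against $\mu$ make sense. Joint measurability comes from measurability of $F$ in $\omega$ and continuity in $x$, i.e. a Carathéodory-function argument. The substantive input is (\ref{eqn:skewint}) from Rodrigues--Ruffino, which is taken as given.
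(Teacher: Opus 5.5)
Your proof is correct, but it reaches the bound by a different mechanism from the paper's.

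The paper stays on the base. It integrates the near-additivity of Lemma \ref{lem:devbound} at $x=y=0$ against $\mathbb{P}$ and uses only $\sigma$-invariance to get $|R_{n+m}-R_n-R_m|\le 1$. It then applies Fekete's lemma to the subadditive sequence $R_n+1$ and the superadditive sequence $R_n-1$, obtaining a limit $R$ with $|R_n/n-R|\le 1/n$. Only afterwards does it identify $R=\hat\rho(F)$, via the Li--Lu pointwise limit and dominated convergence.

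You instead pass to a $\Phi$-invariant $\mu$, on which the cocycle is exactly additive. So $\int\delta^{(n)}_F\,\mathrm{d}\mu=n\int\delta_F\,\mathrm{d}\mu$ holds with no error, and the entire $1/n$ comes from the single change of base point controlled by Lemma \ref{lem:hermanbound}.

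\textbf{What your route buys.} It is shorter and needs no subadditivity argument. It also involves no interchange of limit and integral. The paper's domination step, as written, relies on an $\omega$-dependent threshold ``when $n$ is sufficiently large'', and really needs $L^1$ convergence of the Birkhoff averages or uniform integrability. Your argument even gives a strict inequality: the integrand $|\delta^{(n)}_F(\omega,0)-\delta^{(n)}_F(\omega,x)|$ is everywhere $<1$ and $\mu$ is a probability measure, so its integral is $<1$. Your remark that only $\le 1$ survives integration is therefore overly cautious, though harmless.

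\textbf{The price.} You depend on two inputs the paper's proof does not use: non-emptiness of $\mathcal{I}(\Phi)$, and the Rodrigues--Ruffino formula (\ref{eqn:skewint}). The limiting argument is hidden inside the latter.

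\textbf{The two proofs are complementary.} Your first two steps alone (integrating Lemma \ref{lem:hermanbound}, then the cocycle identity with invariance of $\mu$) give $|R_n/n-\int\delta_F\,\mathrm{d}\mu|\le 1/n$ for every $\mu\in\mathcal{I}(\Phi)$, without invoking (\ref{eqn:skewint}). Combining this with the paper's final identification of $\lim R_n/n$ with $\hat\rho(F)$ would prove both the theorem and (\ref{eqn:skewint}) itself.
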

\begin{proof}

By Lemma \ref{lem:devbound}, $-1\leq F^{(n+m)}_\omega(0)-F^{(m)}_{\sigma^n\omega}(0)-F^{(n)}_\omega(0)\leq 1$ for each $m,n\in\mathbb{N}$ and $\omega\in\Omega$. So
\begin{align}\label{eqn:Fnm-Fm-Fn}
& -1\leq  \int_\Omega F^{(n+m)}_\omega(0)\:\mathrm{d}\mathbb{P}- \int_\Omega F^{(m)}_{\sigma^n\omega}(0)\:\mathrm{d}\mathbb{P}-\int_\Omega F^{(n)}_\omega(0)\:\mathrm{d}\mathbb{P} \leq 1 .
\end{align}
Since the measure $\mathbb{P}$ is $\sigma$-invariant, by (\ref{eqn:Fnm-Fm-Fn}) we have $-1\leq R_{n+m}-R_m-R_n\leq 1$. For each $n\in\mathbb{N}$, define $R^+_n=R_n+1$ and $R^-_n=R_n-1$. So 
\begin{align}
(R^+_{n+m}-1)-(R^+_{m}-1)-(R^+_{n}-1) \leq 1 \implies R^+_{n+m}\leq R^+_n +R^+_m,
\end{align}
and thus the sequence $(R^+_n)$ is subadditive. Hence by Fekete's subadditive lemma, we have that $\lim_{n\to\infty} (1/n)R^+_n=R^+$ exists and for each $n\in\mathbb{N}$
\begin{align}\label{eqn:Feketebound}
\frac{1}{n}R^+_n \geq R^+.   
\end{align}\label{eqn:}
In a similar way, the sequence $(R^-_n)$ is superadditive, and so $\lim_{n\to\infty} (1/n)R^-_n=R^-$ exists and
$(1/n)R^-_n \leq R^-$ for each $n\in\mathbb{N}$. Since $R^+_n-1=R_n=R^-_n+1$, it follows that $R^-=R^+=R$ and $\lim_{n\to\infty}(1/n)R_n=R$. Thus by (\ref{eqn:Feketebound}), for each $n\in\mathbb{N}$, we have $R\leq (1/n)R^+_n=(1/n)R_n+1/n$, and similarly, $R\geq (1/n)R^-_n=(1/n)R_n-1/n$, and so
\begin{align}
\left|\frac{1}{n}R_n-R\right|\leq \frac{1}{n}.    
\end{align}

It remains to show that $R=\hat{\rho}(F)$. We have
\begin{align}
\left|\frac{1}{n}F^{(n)}(0)\right| 
 \leq \frac{1}{n}\sum_{i=0}^{n-1} \left|F_{\sigma^i\omega}(F^{(i)}_\omega(0))-F^{(i)}_\omega(0)\right|  
 \leq \frac{1}{n}\sum_{i=0}^{n-1} \| \Delta_{F_{\sigma^i\omega}} \|,
\end{align}
which converges as $n$ tends to infinity, by the Birkhoff ergodic theorem, to an integrable function $G\in L^1(\Omega)$ that satisfies
\begin{align}
\int_\Omega |G| \:\mathrm{d}\mathbb{P}=\int_\Omega \|\Delta_F\|\:\mathrm{d}\mathbb{P}<\infty.
\end{align}
Thus when $n$ is sufficiently large, the sequence $|(1/n)R_n|$ is dominated by the integrable function $G+1$. Hence by the dominated convergence theorem,
\begin{align*}
R & = \lim_{n\to\infty} \int_\Omega \frac{1}{n}F^{(n)}_\omega(0)\:\mathrm{d}\mathbb{P} 
 =  \int_\Omega \lim_{n\to\infty} \frac{1}{n}F^{(n)}_\omega(0)\:\mathrm{d}\mathbb{P} 
 = \int_\Omega \rho_F \:\mathrm{d}\mathbb{P} 
 = \hat{\rho}(F).
\end{align*}
\end{proof}

\section{Numerical Algorithms}\label{sec:NA}

In this section, we present three numerical algorithms for approximation of the random rotation number: a classical method based on Li and Lu's definition, as well as methods based on the lift-free approaches described in Section \ref{sec:liftfree}.

\subsection{Classical approach}
First we present an algorithm based on Li and Lu's definition of the random rotation number. This approximation method requires making a choice of lift  $F\in\tilde{\mathcal{H}}(\Omega)$ of a random circle homeomorphism. Using equation (\ref{eqn:randomrotationnumber}), we can approximate the random rotation number $\rho_F(\omega)$ for any $x_0\in\mathbb{R}$ and almost every $\omega\in\Omega$ by
\begin{align}\label{eq:classical}
A_F(n,\omega_0,x_0):= \frac{F^{(n)}_{\omega_0}(x_0)-x_0}{n},
\end{align}
for fixed $n\in\mathbb{N}$. 
Note however that the $1/n$ error bound obtained in the deterministic case does not hold in general.

It is enough to specify the values of the function $F_\omega$ on the interval $[0,1)$ for each $\omega\in\Omega$, since we can recover the values outside of this range by using the degree one property (\ref{eq:degreeone}): for $x\in\mathbb{R}$, we have $x=\pi(x)+\lfloor x \rfloor$ and so
\begin{align}
F_\omega(x) = F_\omega(\pi(x)+\lfloor x \rfloor) = F_\omega|_{[0,1)}(\pi(x))+\lfloor x \rfloor.
\end{align}
Pseudocode for the classical method is presented in Algorithm 1.

\begin{algorithm}[hbt]
\caption{Finite approximation of the random rotation number}
\label{alg1}
\begin{algorithmic}[1]
\Require{$F:(\omega,x)\mapsto F(\omega,x)$ (Restriction to $[0, 1)$ of a lift of $f$),
$\sigma:\omega\mapsto \sigma(\omega)$ (base dynamics), 
$\omega_0$ (initial base point), 
$x_0$ (initial fibre point), 
$n$ (number of iterations)}
\Ensure{$A_F(n,\omega_0,x_0)$}
\State $\omega \gets \omega_0$
\State $x \gets x_0$
\State $k \gets 0$ 
\For{$i = 1$ to $n$}
\State $k \gets k+\lfloor x\rfloor$ 
\State $x \gets F(\omega, x-\lfloor x \rfloor)$ 
\State $\omega \gets \sigma(\omega)$ 
\EndFor
\State \textbf{Return} $\frac{k + x-x_0}{n}$
\end{algorithmic}
\end{algorithm}

The computational complexity of Algorithm 1 is straightforward to quantify. Each iteration of the loop involves a single evaluation of the fibre map $F$, an update of the base state, and a fixed number of floor and fractional part operations. The total runtime scales linearly with the number of iterations $n$, yielding a time complexity of $O(n)$. Regarding memory usage, the algorithm only requires storage for the current state variables, which is independent of $n$. Thus, the space complexity of the method is $O(1)$.

\subsection{Binary coding approach}

The binary coding method can also be used to estimate the random rotation number. This method does not depend on a choice of lift, and approximates the random rotation number by the $n$th binary coding frequency $B_f(n,\omega_0,x_0)$, for fixed $n\in\mathbb{N}$.

The interval $J_\omega$ supporting the right branch is defined in terms of the point $c_f=(f_\omega)^{-1}(0)$. For an arbitrary function, finding the inverse of $0$ may not be numerically efficient. Therefore we use another approach to test whether or not a point lies in the interval $J_\omega$, without using the inverse map $(f_\omega)^{-1}$, that compares the value of $f_\omega(x)$ with the value of $f_\omega(0)$: namely $x\in J_\omega$ holds if $\hat{f}_\omega(x)<\hat{f}_\omega(0)$.

Pseudocode for the binary coding method is presented in Algorithm 2. A counter $k$ is used to record the number of visits to the right interval. In each iteration of the loop, the interval map $\hat{f}$ is evaluated once for $x$ and once for $0$ and a single comparison is performed to determine whether  $k$ should be increased, and then the base state is updated. Since these operations all have constant cost, the total runtime grows linearly, and so the time complexity is $O(n)$. The algorithm stores only the current fibre state $x$, base state $\omega$ and the counter $k$, and so the memory requirement is independent of $n$. Thus the space complexity if $O(1)$.

\begin{algorithm}[hbt]
\caption{Binary coding method to approximate the random rotation number}
\begin{algorithmic}[1]
\Require{$\hat{f}:(\omega,x)\mapsto \hat{f}(\omega,x)$ (Interval map equivalent to $f$),
$\sigma:\omega\mapsto \sigma(\omega)$ (base dynamics), 
$\omega_0$ (initial base point), 
$x_0$ (initial fibre point), 
$n$ (number of iterations)}
\Ensure{$B_f(n,\omega_0,x_0)$}
\State $\omega \gets \omega_0$
\State $x \gets x_0$
\State $k \gets 0$ 
\For{$i = 1$ to $n$}
\State $x \gets \hat{f}(\omega,x)$ 
\If{$x<\hat{f}(\omega,0)$}
\State {$k \gets k+1$} 
\EndIf
\State $\omega \gets \sigma(\omega)$ 
\EndFor
\State \textbf{Return} $\frac{k}{n}$
\end{algorithmic}
\end{algorithm}

\subsection{Visit counting approach}

The visit counting method is a third way to estimate the random rotation number. This method does not depend on a choice of lift, and approximates the random rotation number by the $n$th visit frequency $V_f(n,\omega_0,x_0,z)$, for fixed $n\in\mathbb{N}$.

To test whether a point lies in the fundamental domain, we need to consider two cases. If $\hat{f}_\omega(z)>z$, then the fundamental domain is a subinterval of $[0,1)$, whereas if $\hat{f}_\omega(z)\leq z$, then the fundamental domain consists of two disjoint subintervals $[0,\hat{f}_\omega(z))$ and $[z,1)$.

Pseudocode for the visit counting method is presented in Algorithm 3. The algorithm uses a counter $k$ to record how many times the interval condition is satisfied. In each iteration of the loop, the algorithm evaluates the map $\hat{f}$ to update the fibre point and then to determine the right endpoint $y$ of the fundamental domain, checks whether the current fibre point lies in the fundamental domain interval to decide whether the counter $k$ should be increased, and then updates the base point ready for the next iteration. All these operations take constant time, so the time complexity is $O(n)$. The algorithm only needs to store the values of $y$, $k$ and the current base and fibre points. The memory usage therefore remains constant, giving a space complexity of $O(1)$.

\begin{algorithm}[hbt]
\caption{Visit counting method to approximate the random rotation number}
\begin{algorithmic}[1]
\Require{$\hat{f}:(\omega,x)\mapsto \hat{f}(\omega,x)$ (Interval map equivalent to $f$),
$\sigma:\omega\mapsto \sigma(\omega)$ (base dynamics), 
$\omega_0$ (initial base point), 
$x_0$ (initial fibre point), $z$ (reference point),
$n$ (number of iterations)}
\Ensure{$V_f(n,\omega_0,x_0,z)$}
\State $\omega \gets \omega_0$
\State $x \gets x_0$
\State $k \gets 0$ 
\For{$i = 1$ to $n$}
\State $x \gets \hat{f}(\omega,x)$
\State {$y \gets \hat{f}(\omega,z)$}
\If{$z\leq y$}
\If{$z\leq x<y$}
\State $k \gets k+1$
\EndIf
\Else
\If{$z\leq x$ or $x<y$}
\State $k \gets k+1$
\EndIf
\EndIf
\State $\omega\gets \sigma(\omega)$
\EndFor
\State \textbf{Return} $\frac{k}{n}$ 
\end{algorithmic}
\end{algorithm}

\subsection{Comparison of methods}

Given a random circle homeomorphism $(\sigma,f)$, we have several methods to approximate the random rotation number over $n$ iterations. For each $x_0\in S^1$ and $\mathbb{P}$-almost every $\omega_0\in\Omega$, by Theorem \ref{thm:visitcounting}, the visit counting and binary coding methods give the same value:
\begin{align}
V_f(n,\omega_0,x_0,0)=B_f(n,\omega_0,x_0).
\end{align}
Moreover, by Theorem \ref{thm:binarycoding}, 
\begin{align}
\left| A_F(n,\omega_0,0) -B_f(n,\omega,0) \right| <\frac{1}{n},
\end{align}
so these approximations are within $1/n$ of the classical method with the standard lift $F$ of $f$. 
Hence the three approaches to estimating the random rotation number $\rho_F(\omega_0)$ along a single trajectory all produce very similar outputs.

The visit counting method and binary coding method have two advantages of over the classical method. Firstly, the independence of the choice of lift. Secondly, the visit counting method and binary coding method both rely on counting how often certain events occur and so are integer-based methods, and so are less prone to rounding error. On the other hand, as the visit counting and binary coding algorithms require additional steps in each iteration of the loop, their runtimes are proportionally higher than for the classical algorithm.

\section{Numerical results}\label{Sec:numerics}

In this section we use the numerical techniques to estimate the random rotation number for examples of random circle homeomorphisms. As described in Section \ref{sec:errorbounds}, we can get estimates of the random rotation number with known error bounds by integration over the base space $\Omega$. By Theorem \ref{thm:1N}, we have
\begin{align}\label{eqn:intAF}
\left|\int_\Omega A_F(n,\omega,x_0)\:\mathrm{d}\mathbb{P}-\hat{\rho}(F)\right|\leq\frac{1}{n}.
\end{align}

In order to get an estimate of the mean random rotation number $\hat{\rho}(F)$, We can use any one of a variety of numerical integration methods to approximate the integral, for example quadrature rules or Monte-Carlo integration. Each method introduces some numerical error that depends on the regularity properties of the integrand.

In order to reduce some of the issues related to numerical integration, we test the algorithms on examples where the base space $\Omega$ is the unit interval $[0,1)$ or circle $S^1$ and the invariant probability measure $\mathbb{P}$ is Lebesgue measure.

Given $m\in\mathbb{N}$, consider the uniform partition
\begin{align}
P_m=\left\{\omega_j=\frac{j}{m}: j=0,\ldots, m\right\}.
\end{align}
In order to approximate the integral in (\ref{eqn:intAF}), we take a Riemann sum of the form
\begin{align}\label{eqn:mean}
R_F(n,m,x_0)=\frac{1}{m}\sum_{j=1}^m A_F(n,\omega_j,x_0).
\end{align}
In other words, the integral is approximated by averaging the value of $A_F(n,\omega,x_0)$ across all $\omega$ in the uniform partition $P_m$.

To test the convergence of the numerical algorithms, we start with an example for which the value of the random rotation number can be determined precisely.

\begin{ex}\label{ex:convergence}
For the base dynamics, we take $\Omega=S^1$ and define $\sigma:\Omega\to\Omega$ to be the golden rotation $\sigma(\omega)=\omega+(\sqrt{5}-1)/2\:\mathrm{mod}\:1$, so the Lebesgue measure $\mathbb{P}$ on $\Omega$ is preserved.
For the fibre map we take the lift $f\in\mathcal{H}(\Omega)$ of a random Arnold homeomorphism, as defined in (\ref{eq:Arnold}), with 
\begin{align}
\alpha(\omega)=\sin(2\pi\omega)
\quad\mathrm{and}\quad
\beta(\omega)=\begin{cases}
+1 & \ \ \:0\leq \omega <1/2 \\
\ 0 & 1/2\leq \omega<3/4 \\
-1 & 3/4\leq \omega<1
\end{cases}
\end{align}
and so $F_\omega$ is a piecewise continuous function of $\omega$. Note that each fibre map acts as a translation when restricted to the set of the integers: namely, $F_\omega(k)=k+\beta(\omega)\in\mathbb{Z}$ for each $k\in\mathbb{Z}$ and $\omega\in\Omega$.

Therefore $\Delta_{F_\omega}(k)=\Delta_{F_\omega}(0)=\beta(\omega)\in\mathbb{Z}$ for each $k\in\mathbb{Z}$ and $\omega\in\Omega$, and so we have
\begin{align*}
\frac{1}{n}F^{(n)}_\omega(0) 
 = \frac{1}{n}\sum_{i=0}^{n-1} \Delta_{F_{\sigma^i\omega}}(F^{(i)}_\omega(0)) 
 = \frac{1}{n}\sum_{i=0}^{n-1} \Delta_{F_{\sigma^i\omega}}(0) 
 = \frac{1}{n}\sum_{i=0}^{n-1} \beta(\sigma^i\omega) 
\end{align*}
which converges by the Birkhoff ergodic theorem for $\mathbb{P}$-almost every $\omega\in\Omega$ to
\begin{align}
\int_\Omega \beta\:\mathrm{d}\mathbb{P} = \frac{1}{2}\times (+1) + \frac{1}{4}\times 0 +\frac{1}{4}\times(-1) = \frac{1}{4}\,.
\end{align}
Hence the precise value of the mean random rotation number $\hat{\rho}(F)$ is $1/4$. Since the base map is ergodic, it follows that $\rho_F(\omega)=1/4$ for $\mathbb{P}$-almost every $\omega\in\Omega$.

The random rotation number is independent of the choice of $x_0$, so for testing the numerical algorithms, we take a non-integer initial point $x_0=0.3$ so as not to take special advantage of the fact that the fibre maps act as translations on the integers. Using the classical approach of Algorithm 1, the approximations $A_F(n,\omega_0,0.3)$ were calculated for $n=1,\ldots,1000$. Figure \ref{fig:convergence} shows the approximations for the single trajectory (using $\omega_0=0$) and the averaged approximation $R_F(n,100,0.3)$ using the uniform partition $P_{100}$. The averaged approximation $R_F(n,100,0.3)$ lies well within the $1/n$ bounds, although the single trajectory approximation $A_F(n,0,0.3)$ does not.

\begin{figure}[htb]
    \centering
    \includegraphics[scale=1.00]{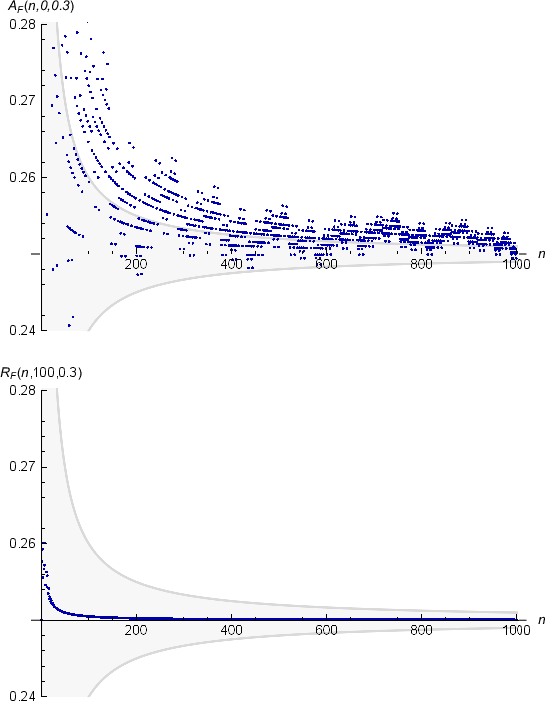}  
    \caption{Graph showing approximations of the random rotation number against $n$ based on a single trajectory (top) and by averaging over 100 trajectories (bottom), along with the error bounds $1/4\pm1/n$ (light grey).}
    \label{fig:convergence}
\end{figure}
\end{ex}

Next we consider an example with a non-continuous base transformation and for which the dependence of the fibre maps on the base is also non-continuous.

\begin{ex}\label{ex:IETbase}
For the base dynamics, we take $\Omega=[0,1)$ and define $\sigma:\Omega\to\Omega$ to be the interval exchange transformation given by
\begin{align}\label{eqn:iet}
\sigma(\omega) =
\begin{cases}
\omega + 1 - u, & \text{if }\ 0 \leq \omega < u \\
\omega + 1 - u - v, & \text{if }\ u \leq \omega < v \\
\omega - v, & \text{if }\ v \leq \omega < 1 
\end{cases}
\end{align}
where $u=\sqrt{3}/3$ and $v=\sqrt{2}/2$. A plot of $\sigma$ is shown in Figure \ref{fig:3iet}. The map preserves Lebesgue measure $\mathbb{P}$, and since $1$, $u$ and $v$ are not rationally related, $\mathbb{P}$ is in fact the unique invariant probability measure for $\sigma$ by Keane's condition \cite{Keane1975}. This choice of base map is not within the quasiperiodic setting, since $\sigma$ not a rotation nor is measure-theoretically isomorphic to a rotation (see Dajani \cite{Dajani2002}).

\begin{figure}[htb]
    \centering
    \includegraphics[scale=1.00]{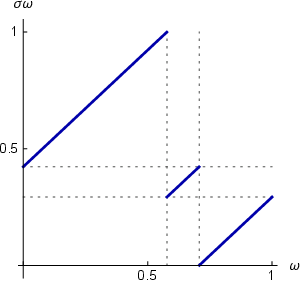}  
    \caption{Graph of the interval exchange transformation $\sigma$ defined in (\ref{eqn:iet}).}
    \label{fig:3iet}
\end{figure}

For the fibre map we take a random Arnold homeomorphism  $f\in\mathcal{H}(\Omega)$, as defined in (\ref{eq:Arnold}), with 
\begin{align}
\alpha(\omega)=\frac{\sin(2\pi \omega)+\{5\omega^2\}}{2} \quad\textrm{and}\quad \beta(\omega) = \frac{1-3\omega^2+\{2\sin(2\pi \omega)\}}{2},
\end{align}
where $\{x\}=x-\lfloor x\rfloor\in[0,1)$ denotes the fractional part of $x\in\mathbb{R}$. Thus $\alpha$ and $\beta$ are piecewise-continuous functions of $\omega$ (see Figure \ref{fig:fibremap}). 

\begin{figure}[htb]
    \centering
    \includegraphics[scale=1.00]{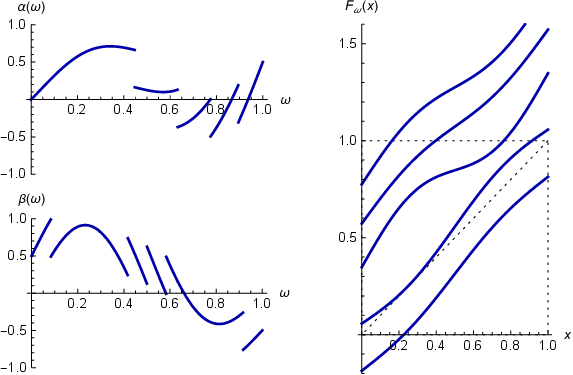}  
    \caption{Graphs of $\alpha(\omega)$ and $\beta(\omega)$ against $\omega\in[0,1)$ (left) and $F_\omega(x)$ against $x\in[0,1)$ for selected $\omega$ (right).}
    \label{fig:fibremap}
\end{figure}

Using the uniform partition $P_{100}$ of $100$ points, the approximations $R_F(n,100,0)$ of the random rotation number were calculated (see Figure \ref{fig:1nconvergence}). The approximations $R_F(n,100,0)$ appear well within the $1/n$ bounds. 

\begin{figure}[htb]
    \centering
    \includegraphics[scale=1.00]{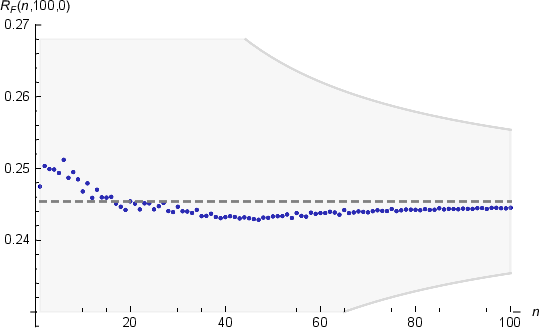}  
    \caption{Graph showing approximations $R_F(n,100,0)$ of the random rotation number against $n$, along with the estimated error bounds $R_F(1000,100,0)\pm1/n$ (light grey).}
    \label{fig:1nconvergence}
\end{figure}
\end{ex}

Finally, as an application of the numerical algorithms, we investigate the mean rotation number $\hat{\rho}(F_a)$ along a one-parameter family of random circle homeomorphisms $\{F_a\}_{a\in [0,1]}$, to observe the dependence of $\hat{\rho}(F_a)$ on $a$.

\begin{ex}\label{ex:1parameterfamily}
For the base dynamics, we take $\Omega=[0,1)$ and the same interval exchange transformation as in the previous example.

For the fibre map we take a random Arnold homeomorphism (see (\ref{eq:Arnold})) $f\in\mathcal{H}(\Omega)$ with 
\begin{align}
\alpha(\omega)=\frac{9+\{\sqrt{2}\,\omega\}}{10} \quad\textrm{and}\quad \beta(\omega) = \frac{\{\pi \omega\}}{5}.
\end{align}
Thus the standard lift $F$ of $f$ is given by
\begin{align}
F_\omega(x)= x+\frac{9+\{\sqrt{2}\,\omega\}}{20\pi}\sin(2\pi x)+\frac{\{\pi \omega\}}{5}.
\end{align}

Note that the dependence of $F_\omega$ on $\omega$ is measurable but not continuous. For some values of $\omega$ such as $0$, the map $F_\omega$ has fixed points, while for other values, such as $\omega=0.3$, there are no fixed points.
Since $\sigma$ is ergodic, the random rotation number is $\mathbb{P}$-almost everywhere equal to $\rho(F)$.

Consider now the one-parameter family $(F_a)_{a\in [0,1]}$, where for each $a\in [0,1]$, $F_a\in\tilde{\mathcal{H}}(\Omega)$ is given by $F_a=F+a$. As proved by Li and Lu, the mapping $a\mapsto \hat{\rho}(F_a)$ is a continuous function $[0,1]\to\mathbb{R}$. Figure \ref{fig:3ietstaircase} shows the dependence of the approximation $R_{F_a}(500,100,0)$ of $\hat{\rho}(F_a)$ on the parameter $a$. Note the evidence of `phase-locking', with the graph locally constant on certain intervals. 

\begin{figure}[htb]
    \centering
    \includegraphics[scale=1.00]{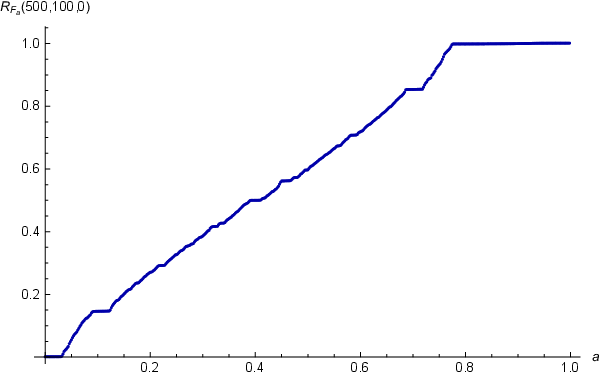}  
    \caption{Graph showing the dependence on $a$ of the approximation $R_{F_a}(500,100,0)$ of $\hat{\rho}(F_a)$.}
    \label{fig:3ietstaircase}
\end{figure}
\end{ex}

\section{Conclusions}

In this paper, we showed that the random rotation number of a random circle homeomorphism can be defined without reference to any lifts, using binary coding (\ref{eq:binary}) or using visit counting (\ref{eq:visit}), each giving the same value as when combining the classical definition (\ref{eqn:randomrotationnumber}) with the standard lift (\ref{eq:standardlift}). We used each of these definitions to set out algorithms for approximating the random rotation number.

We proved that the mean value of the random rotation number can be approximated by the integral (\ref{eq:intapprox}) over the base space of the $n$th approximation to within an error of at most $1/n$. Motivated by this result, we performed numerical computations on random Arnold homeomorphisms, where the dependence on the noise parameter is not continuous. The calculations indicated that improved estimates of the mean random rotation number can be achieved by averaging over multiple realisations of the noise. In Examples \ref{ex:convergence} and \ref{ex:IETbase}, the averaged approximation $R_F$ can be observed (in Figures \ref{fig:convergence} and \ref{fig:1nconvergence} respectively) to lie well within the bounds arising from Theorem \ref{thm:1N}. So we leave as an open problem the question of under which circumstances (if any) the bounds can be sharpened. 


\end{document}